\documentclass[11pt, a4paper, fleqn, final]{article}
\usepackage[utf8]{inputenc}
\usepackage[T1]{fontenc}
\usepackage[english]{babel}
\usepackage{csquotes}
\usepackage{amsmath, amsthm, amssymb, mathtools, dsfont}
\usepackage[backend=bibtex, sortcites=true, isbn=false, maxnames=5, mincrossrefs=1, parentracker=true, sorting=nyt]{biblatex}
\usepackage{hyperref, url}
\usepackage{graphicx}
\usepackage{xcolor}
\usepackage{fancyhdr}
\usepackage{libertine}


\addbibresource{../../bibliography/authors.bib}
\addbibresource{../../bibliography/bib_math.bib}
\addbibresource{../../bibliography/bib_own.bib}
\addbibresource{../../bibliography/bib_other.bib}

\allowdisplaybreaks[4]
\sloppy

\newtheorem{theo}{Theorem}
\newtheorem{coro}[theo]{Corollary}
\newtheorem{prop}[theo]{Proposition}

\theoremstyle{definition}

\newtheorem{rema}[theo]{Remark}


\DeclarePairedDelimiter\abs{\lvert}{\rvert}

\DeclarePairedDelimiter{\floor}{\lfloor}{\rfloor}

\DeclareMathOperator{\md}{def} 
\DeclareMathOperator{\np}{p} 
\DeclareMathOperator{\nnp}{np} 

\makeatletter
\def\moverlay{\mathpalette\mov@rlay}
\def\mov@rlay#1#2{\leavevmode\vtop{%
   \baselineskip\z@skip \lineskiplimit-\maxdimen
   \ialign{\hfil$\m@th#1##$\hfil\cr#2\crcr}}}
\newcommand{\charfusion}[3][\mathord]{
    #1{\ifx#1\mathop\vphantom{#2}\fi
        \mathpalette\mov@rlay{#2\cr#3}
      }
    \ifx#1\mathop\expandafter\displaylimits\fi}
\makeatother
\newcommand{\cdotcup}{\charfusion[\mathbin]{\cup}{\cdot}}

\title{Polynomial reconstruction of \\ the matching polynomial\footnote{This work was supported by the National Natural Science Foundation of China.}}
\author{Xueliang Li, Yongtang Shi, Martin Trinks \\[+0.8ex] 
Center for Combinatorics, 
Nankai University, 
Tianjin 300071, China \\[+0.8ex] 
\small \tt lxl@nankai.edu.cn, shi@nankai.edu.cn, \\[-0.4ex] \small \tt martin.trinks@googlemail.com}
\date{}

\begin{document}

\maketitle

\begin{abstract}
The matching polynomial of a graph is the generating function of the numbers of its matchings with respect to their cardinality. A graph polynomial is polynomial reconstructible, if its value for a graph can be determined from its values for the vertex-deleted subgraphs of the same graph. This note discusses the polynomial reconstructibility of the matching polynomial. We collect previous results, prove it for graphs with pendant edges and disprove it for some graphs.
\end{abstract}

\section{Introduction}

The famous (and still unsolved) reconstruction conjecture of \textcite{kelly1957} and \textcite{ulam1960} states that every graph $G$ with at least three vertices can be reconstructed from (the isomorphism classes of) its vertex-deleted subgraphs.

With respect to a graph polynomial $P(G)$, this question may be adapted as follows: Can $P(G)$ of a graph $G = (V, E)$ be reconstructed from the graph polynomials of the vertex deleted-subgraphs, that is from the collection $P(G_{-v})$ for $v \in V$? Here, this problem is considered for the matching polynomial of a graph, which is the generating function of the number of its matchings with respect to their cardinality. 

This paper aims to prove that graphs with pendant edges are polynomial reconstructible and, on the other hand, to display some evidence that arbitrary graphs are not.

In the reminder of this section the necessary definitions and notation are given. Further, the previous results from the literature are mentioned in Section \ref{sec:known_results}. Section \ref{sec:pendant_edges} and Section \ref{sec:counterexamples} contain the result for pendant edges and the counterexamples in the general case.

Let $G = (V, E)$ be a graph. A \emph{matching} in $G$ is an edge subset $A \subseteq E$, such that no two edges have a common vertex. The \emph{matching polynomial} $M(G, x, y)$ is defined as
\begin{align}
M(G, x, y) = \sum_{\substack{A \subseteq E \\ A \text{ is matching in }G}}{x^{\md(G, A)} y^{\abs{A}}},
\end{align}
where $\md(G, A) = \abs{V} - \abs{\bigcup_{e \in A}{e}}$ is the number of vertices not included in any of the edges of $A$. A matching $A$ is a \emph{perfect matching}, if its edges include all vertices, that means if $\md(G, A) = 0$. A \emph{near-perfect matching} $A$ is a matching that includes all vertices except one, that means $\md(G, A) = 1$. For more information about matchings and the matching polynomial, see \cite{farrell1979b, gutman1977, lovasz1986}. 

There are also two versions of univariate matching polynomials defined in the literature, namely the \emph{matching defect polynomial} and the \emph{matching generating polynomial} \cite[Section 8.5]{lovasz1986}. For simple graphs, the previously mentioned matching polynomials are equivalent to each other.

For a graph $G = (V, E)$ with a vertex $v \in V$, $G_{-v}$ is the graph arising from the \emph{deletion} of $v$, i.e. arising by the removal of all edges incident to $v$ and $v$ itself. The multiset of (the isomorphism classes of) the vertex-deleted subgraphs $G_{-v}$ for $v \in V$ is the \emph{deck} of $G$. The \emph{polynomial deck} $\mathcal{D}_P(G)$ 
with respect to a graph polynomial $P(G)$ is the multiset of $P(G_{-v})$ for $v \in V$. A graph polynomial $P(G)$ is \emph{polynomial reconstructible}, if $P(G)$ can be determined from $\mathcal{D}_P(G)$.
\section{Previous results}
\label{sec:known_results}

For results about the polynomial reconstruction of other graph polynomials, see the article by \textcite[Section 1]{bresar2005} and the references therein. For additional results, see \cites{li1995}[Section 7]{tittmann2011}[Subsection 4.7.3]{trinks2012c}.

By arguments analogous to those used in Kelly's Lemma \cite{kelly1957}, the derivative of the matching polynomials of a graph $G = (V, E)$ equals the sum of the polynomials in the corresponding polynomial deck. 

\begin{prop}[Lemma 1 in \cite{farrell1987}]
Let $G = (V, E)$ be a graph. The matching polynomial $M(G, x, y)$ satisfies 
\begin{align}
\frac{\delta}{\delta x} M(G, x, y) = \sum_{v \in V}{M(G_{-v}, x, y)}.
\end{align}
\end{prop}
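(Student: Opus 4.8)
The plan is to prove the identity directly from the definition of $M(G, x, y)$ by differentiating term by term (which is legitimate, as the polynomial is a finite sum) and then reorganizing the result combinatorially. Differentiating with respect to $x$ brings down the exponent $\md(G, A)$, giving
\begin{align}
\frac{\delta}{\delta x} M(G, x, y) = \sum_{\substack{A \subseteq E \\ A \text{ matching in } G}} \md(G, A)\, x^{\md(G, A) - 1} y^{\abs{A}}.
\end{align}
The key idea is to read the coefficient $\md(G, A)$ combinatorially: since it counts exactly the vertices of $G$ left uncovered by $A$, I would write it as a sum of indicators, $\md(G, A) = \sum_{v \in V} [\,v \text{ is not covered by } A\,]$, and then interchange the two summations so as to group the terms according to the uncovered vertex $v$.

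After this interchange, for each fixed $v \in V$ I am left with a sum over all matchings $A$ of $G$ that do not cover $v$. The central observation is that these are precisely the matchings of the vertex-deleted subgraph $G_{-v}$: a matching avoiding $v$ contains no edge incident to $v$ and hence lies in $G_{-v}$, and conversely every matching of $G_{-v}$ is a matching of $G$ missing $v$. Under this correspondence I would track the defect carefully: if $A$ misses $v$, then passing to $G_{-v}$ removes exactly one uncovered vertex, so that $\md(G, A) - 1 = \md(G_{-v}, A)$. Substituting this relation turns the inner sum for each $v$ into $\sum_{A \text{ matching in } G_{-v}} x^{\md(G_{-v}, A)} y^{\abs{A}} = M(G_{-v}, x, y)$, and summing over $v$ yields the claim.

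I expect the only delicate point to be the bookkeeping of the defect exponent, and in particular the boundary case of perfect matchings. A perfect matching $A$ has $\md(G, A) = 0$ and therefore contributes nothing to the derivative; consistently, it covers every vertex and so appears in none of the subgraphs $G_{-v}$, so no spurious negative power of $x$ ever arises in the manipulation. Once this is checked, the remainder is a routine rearrangement of finite sums, making the argument a matching-polynomial analogue of Kelly's Lemma.
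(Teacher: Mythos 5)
Your proof is correct: differentiating the defining sum term by term, expanding $\md(G, A)$ as a sum of indicators over the vertices left uncovered by $A$, exchanging the two summations, and using the bijection between matchings of $G$ avoiding $v$ and matchings of $G_{-v}$ together with the shift $\md(G, A) - 1 = \md(G_{-v}, A)$ is exactly the intended argument. The paper itself gives no proof --- it cites Lemma 1 of \cite{farrell1987} and remarks only that the identity follows by arguments analogous to Kelly's Lemma --- and your computation is precisely that standard route, including the correct observation that perfect matchings contribute to neither side.
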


In other words, all coefficients of the matching polynomial except the one corresponding to the number of perfect matchings can be determined from the polynomial deck and thus also from the deck:
\begin{align}
m_{i, j}(G) = \frac{1}{i} \sum_{v \in V}{m_{i, j}(G_{-v})} \qquad \forall i \leq 1,
\end{align}
where $m_{i, j}(G)$ is the coefficient of the monomial $x^i y^j$ in $M(G,x,y)$. 

Consequently, the (polynomial) reconstruction of the matching polynomial reduces to the determination of the number of perfect matchings.

\begin{prop} \label{prop:polynomial_reconstruction}
The matching polynomial $M(G, x, y)$ of a graph $G$ can be determined from its polynamial deck $\mathcal{D}_M(G)$ and its number of perfect matchings. In particular, the matching polynomials $M(G, x, y)$ of graphs with an odd number of vertices are polynomial reconstructible.
\end{prop}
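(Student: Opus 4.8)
The plan is to lean directly on the derivative formula of the preceding proposition, which already recovers every coefficient of $M(G, x, y)$ in which $x$ appears, and then to identify the unique remaining unknown coefficient with the number of perfect matchings.

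First I would recall that the identity $\frac{\delta}{\delta x} M(G, x, y) = \sum_{v \in V} M(G_{-v}, x, y)$, upon comparing the coefficients of $x^{i-1} y^j$ on both sides, yields $m_{i, j}(G) = \frac{1}{i} \sum_{v \in V} m_{i, j}(G_{-v})$ for every $i \geq 1$. Hence all coefficients $m_{i, j}(G)$ with $i \geq 1$ are computable from the polynomial deck $\mathcal{D}_M(G)$ alone, and the only coefficients left undetermined are those with $i = 0$.

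Second I would inspect the coefficients with $i = 0$ closely. By definition $m_{0, j}(G)$ counts the matchings $A$ of size $j$ with $\md(G, A) = 0$, which are exactly the perfect matchings. Since a perfect matching covers all $\abs{V}$ vertices, it must consist of $\abs{V}/2$ edges, so $m_{0, j}(G) = 0$ unless $\abs{V}$ is even and $j = \abs{V}/2$. Thus there is at most one nonzero coefficient with $i = 0$, namely $m_{0, \abs{V}/2}(G)$, and this is precisely the number of perfect matchings of $G$. Noting that $\abs{V}$ itself is read off from the deck (for instance as the cardinality of $\mathcal{D}_M(G)$, or as one more than the $x$-degree of any of its members), knowledge of the number of perfect matchings pins down this final coefficient and so determines $M(G, x, y)$ completely.

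For the second assertion I would observe that when $\abs{V}$ is odd no perfect matching can exist, so the number of perfect matchings equals $0$ and requires no additional input; the matching polynomial is then determined by $\mathcal{D}_M(G)$ alone, i.e. it is polynomial reconstructible. I do not anticipate any genuine obstacle in this argument, as the content is essentially bookkeeping layered on top of the derivative formula. The truly difficult case, the reconstruction of the number of perfect matchings when $\abs{V}$ is even, is exactly what the remainder of the paper addresses rather than this proposition.
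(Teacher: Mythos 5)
Your proposal is correct and takes essentially the same route as the paper, which likewise derives the proposition from the derivative identity by coefficient comparison, noting that the sole coefficient not determined by the polynomial deck is $m_{0,\abs{V}/2}(G)$, the number of perfect matchings, which vanishes when $\abs{V}$ is odd. (One inherited slip, harmless to the argument: comparing coefficients of $x^{i-1}y^{j}$ actually gives $m_{i,j}(G) = \frac{1}{i}\sum_{v \in V} m_{i-1,j}(G_{-v})$, with $m_{i-1,j}$ rather than $m_{i,j}$ on the right-hand side --- the paper's displayed formula, which you copied, contains the same off-by-one, as well as the typo ``$\forall i \leq 1$'' for $\forall i \geq 1$.)
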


\textcite[Statement 6.9]{tutte1979} has shown that the number of perfect matchings of a simple graph can be determined from its deck and therefore gave an affirmative answer on the reconstruction problem for the matching polynomial.

The matching polynomial of a simple can graph also be reconstructed from the deck of edge-extracted and edge-deleted subgraphs \cite[Theorem 4 and 6]{farrell1987} and from the polynomial deck of the edge-extracted graphs \cite[Corollary 2.3]{gutman1992}. For a simple graph $G$ on $n$ vertices, the matching polynomial is reconstructible from the collection of induced subgraphs of $G$ with $\floor{\frac{n}{2}} + 1$ vertices \cite[Theorem 4.1]{godsil1981b}.
\section{Result for simple graphs with pendant edges}
\label{sec:pendant_edges}

\begin{theo} \label{theo:forest_perfect_matching}
Let $G = (V, E)$ be a forest. $G$ has a perfect matching if and only if each vertex-deleted subgraph $G_{-v}$ for $v \in V$ has a near-perfect matching.
\end{theo}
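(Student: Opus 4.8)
Since the statement is an equivalence, the plan is to prove the two implications separately; I expect the forward direction to be routine and the reverse direction to carry the real content and to be where the forest hypothesis enters.

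For the forward implication I would argue directly, without even using that $G$ is a forest. Suppose $G$ has a perfect matching $M$, fix $v \in V$, and let $\{u, v\} \in M$ be the edge covering $v$. Then $M \setminus \{\{u, v\}\}$ contains no edge meeting $v$, so it is a matching of $G_{-v}$; it covers every vertex of $G_{-v}$ except $u$, whence $\md(G_{-v}, M \setminus \{\{u, v\}\}) = 1$ and it is a near-perfect matching. As $v$ was arbitrary, every $G_{-v}$ has a near-perfect matching.

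For the reverse implication the key idea is to delete not a leaf but the neighbour of a leaf. Assuming $G$ has at least one edge, being a forest it possesses a pendant vertex $\ell$; let $u$ be its unique neighbour, and consider $G_{-u}$. Because the only edge incident to $\ell$ was $\{\ell, u\}$, the vertex $\ell$ is isolated in $G_{-u}$ and so cannot be covered by any matching. By hypothesis $G_{-u}$ has a near-perfect matching $M'$, which misses exactly one vertex; that vertex is therefore forced to be $\ell$. Hence $M'$ covers $V \setminus \{u, \ell\}$, and since every edge of $G_{-u}$ is an edge of $G$ and $M'$ avoids $u$, the set $M' \cup \{\{\ell, u\}\}$ is a matching of $G$ covering all of $V$, that is, a perfect matching.

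The step I expect to need the most care is the claim that the single vertex left uncovered by $M'$ must be $\ell$; this rests on $\ell$ being the unique isolated vertex of $G_{-u}$, which the hypothesis itself supplies, since a graph with two or more isolated vertices has $\md \geq 2$ for every matching and thus admits no near-perfect matching. I would finally dispose of the edgeless case: for $\abs{V} \geq 3$ an edgeless forest has some $G_{-v}$ with at least two isolated vertices, so the hypothesis already fails and the equivalence is vacuous, which lets me assume throughout that a pendant edge exists. It is worth remarking that acyclicity is used only to guarantee this pendant vertex, so the same argument applies verbatim to any graph with a pendant edge, in keeping with the theme of this section.
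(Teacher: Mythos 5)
Your proposal is correct and takes essentially the same route as the paper: remove the matching edge covering $v$ for the forward direction, and for the converse delete the neighbour $u$ of a leaf and adjoin the pendant edge to a near-perfect matching of $G_{-u}$. You are in fact more careful than the paper's own two-line argument, which silently assumes both that a degree-one vertex exists and that the vertex left uncovered by the near-perfect matching of $G_{-u}$ is the leaf --- the two points you explicitly verify.
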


\begin{proof}
For the first direction we assume that $G$ has a perfect matching $M$. Then each vertex-deleted subgraph $G_{-v}$ has a near-perfect matching $M' = M \setminus {e}$, where $e$ is the edge in the matching $M$ incident to $v$.

For the second direction, let $w$ be one of the vertices of degree $1$ and $u$ its neighbor. If each vertex-deleted subgraph has a near-perfect matching, say $M'$, so does $G_{-u}$. Hence, $M' \cup \{\{u, w\}\}$ is a perfect matching of $G$.
\end{proof}

Actually, this theorem can be generalized to simple graphs with a pendant edge (or equivalently a vertex of degree $1$).

\begin{theo} \label{theo:pendant_perfect_matching}
Let $G = (V, E)$ be a simple graph with a vertex of degree $1$. $G$ has a perfect matching if and only if each vertex-deleted subgraph $G_{-v}$ for $v \in V$ has a near-perfect matching.
\end{theo}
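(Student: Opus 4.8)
The plan is to follow the structure of the proof of Theorem \ref{theo:forest_perfect_matching}, observing that that argument in fact only used the existence of a vertex of degree $1$ (a leaf) and never the full forest structure. Hence the same reasoning should carry over essentially verbatim to any simple graph possessing such a vertex, which is exactly the hypothesis here.

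First I would dispatch the forward direction, which requires no hypothesis on $G$ at all. Assuming $G$ has a perfect matching $M$, for each $v \in V$ let $e \in M$ be the unique edge of $M$ incident to $v$. Then $M \setminus \{e\}$ uses no edge meeting $v$, so it is a matching of $G_{-v}$; moreover it covers every vertex except $v$ and the other endpoint of $e$, so in $G_{-v}$ it leaves exactly one vertex uncovered and is therefore near-perfect. This is identical to the forest case.

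For the backward direction I would exploit the degree-$1$ vertex, which is where the single new idea enters. Let $w$ be a vertex of degree $1$ and let $u$ be its unique neighbor. The crucial observation is that in the deletion $G_{-u}$ the vertex $w$ becomes isolated, since its only incident edge $\{u,w\}$ is removed together with $u$. By hypothesis $G_{-u}$ has a near-perfect matching $M'$, which misses exactly one vertex; as an isolated vertex cannot be covered by any edge, that missed vertex must be $w$. Consequently $M'$ covers precisely $V \setminus \{u, w\}$, and since neither $u$ nor $w$ is incident to any edge of $M'$, the set $M' \cup \{\{u,w\}\}$ is a matching of $G$ covering all of $V$, i.e.\ a perfect matching.

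I do not expect a genuine obstacle here: the entire content is the isolation of $w$ in $G_{-u}$, which forces the unique uncovered vertex of the guaranteed near-perfect matching of $G_{-u}$ to be $w$ itself. The only point deserving slightly more care than in the forest case is making this uniqueness-of-the-uncovered-vertex step explicit, since that is precisely the passage the proof of Theorem \ref{theo:forest_perfect_matching} left implicit; once stated, the extension to $M' \cup \{\{u,w\}\}$ is immediate.
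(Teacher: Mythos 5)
Your proof is correct and follows exactly the argument the paper itself uses (the paper simply states that the proof of Theorem~\ref{theo:forest_perfect_matching} carries over verbatim, since only the degree-$1$ vertex is needed). Your explicit observation that $w$ becomes isolated in $G_{-u}$, forcing the near-perfect matching of $G_{-u}$ to miss precisely $w$, is exactly the step the paper leaves implicit, and spelling it out is a welcome clarification rather than a deviation.
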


The proof is exactly the same as for the theorem above. We do not know whether or not this can be further generalized to arbitrary simple connected graphs and are also not aware of publications regarding this questions. Therefore, this problem seems to be worth further studies.

Forests have either none or one perfect matching. Because every pendant edge must be in a perfect matching (in order to cover the vertices of degree $1$) and the same holds recursively for the subforest arising by deleting all the vertices of the pendant edges. Therefore, from Proposition \ref{prop:polynomial_reconstruction} and Theorem \ref{theo:forest_perfect_matching} the polynomial reconstructibility of the matching polynomials follows.

\begin{coro}
The matching polynomials $M(G, x, y)$ of forests are polynomial reconstructible.
\end{coro}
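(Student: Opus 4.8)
The plan is to apply Proposition \ref{prop:polynomial_reconstruction}, which reduces the reconstruction of $M(G, x, y)$ from the polynomial deck $\mathcal{D}_M(G)$ to the single task of determining the number of perfect matchings of $G$. First I would record the dichotomy for forests: a forest has at most one perfect matching, since every leaf must be matched to its unique neighbour, and deleting the two endpoints of each such pendant edge yields a smaller forest in which the perfect matching is again forced; by induction the perfect matching, when it exists, is unique. Hence for a forest the number of perfect matchings is either $0$ or $1$, and the entire problem collapses to deciding \emph{whether} $G$ admits a perfect matching at all.

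Next I would invoke Theorem \ref{theo:forest_perfect_matching}: the forest $G$ has a perfect matching if and only if every vertex-deleted subgraph $G_{-v}$ has a near-perfect matching. The key observation is that this latter condition can be read directly off the polynomial deck. A near-perfect matching of $G_{-v}$ is precisely a matching $A$ with $\md(G_{-v}, A) = 1$, so $G_{-v}$ has a near-perfect matching if and only if the coefficient of $x^1$ in $M(G_{-v}, x, y)$ (a polynomial in $y$) is nonzero. Scanning the polynomials in $\mathcal{D}_M(G)$ for the presence of an $x^1$-term therefore tells us, vertex by vertex, whether each $G_{-v}$ has a near-perfect matching.

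Combining these steps yields the reconstruction. If every member of the polynomial deck carries a nonzero $x^1$-coefficient, then by Theorem \ref{theo:forest_perfect_matching} the forest $G$ has a perfect matching, so the number of perfect matchings equals $1$; otherwise it equals $0$. Feeding this number into Proposition \ref{prop:polynomial_reconstruction} then recovers $M(G, x, y)$ in full.

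I expect no serious obstacle here, since all the substantive ingredients are already established and the argument is an assembly of them. The only points demanding care are the translation between the combinatorial statement \enquote{$G_{-v}$ has a near-perfect matching} and the algebraic statement \enquote{the $x^1$-coefficient of $M(G_{-v}, x, y)$ is nonzero}, and the standing assumption that $G$ is known to be a forest, which is what licenses the clean \enquote{$0$ or $1$} dichotomy and hence the reduction to a single yes/no question.
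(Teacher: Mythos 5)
Your proof is correct and takes essentially the same route as the paper: uniqueness of a forest's perfect matching via forced pendant edges (so $\np(G) \in \{0,1\}$), Theorem \ref{theo:forest_perfect_matching} to decide existence, and Proposition \ref{prop:polynomial_reconstruction} to recover $M(G, x, y)$. Your explicit remark that \enquote{$G_{-v}$ has a near-perfect matching} is equivalent to a nonzero coefficient of $x^1$ in $M(G_{-v}, x, y)$ only spells out a translation the paper leaves implicit.
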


On the other hand, arbitrary graphs with pendant edges can have more than one perfect matching. However, Theorem \ref{theo:forest_perfect_matching} can be extended to obtain the number of perfect matchings. For a graph $G = (V, E)$, the number of perfect matchings and of near-perfect machtings of $G$ is denoted by $\np(G)$ and $\nnp(G)$, respectively.

\begin{theo} \label{theo:pendant_number_perfect_matching}
Let $G = (V, E)$ be a simple graph with a pendant edge $e = \{u, w\}$ where $w$ is a vertex of degree $1$. Then we have
\begin{align}
&\np(G) = \nnp(G_{-u}) \leq \nnp(G_{-v}) \qquad \forall v \in V \text{ and particularly} \\
&\np(G) = \min{\{\nnp(G_{-v}) \mid v \in V\}}.
\end{align}
\end{theo}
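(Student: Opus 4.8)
The plan is to prove the two parts by exhibiting explicit maps between the relevant sets of matchings; write $\mathrm{PM}(H)$ and $\mathrm{NPM}(H)$ for the sets of perfect and near-perfect matchings of a graph $H$, so that $\np(H) = \abs{\mathrm{PM}(H)}$ and $\nnp(H) = \abs{\mathrm{NPM}(H)}$. First I would establish $\np(G) = \nnp(G_{-u})$ by constructing a bijection $\mathrm{PM}(G) \to \mathrm{NPM}(G_{-u})$. Since $w$ has degree $1$, its only incident edge is $e = \{u, w\}$, so every perfect matching $M$ of $G$ is forced to contain $e$; removing it yields $M \setminus \{e\}$, a matching of $G_{-u}$ covering all of its vertices except the now-isolated $w$, hence a near-perfect matching. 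Conversely, $w$ is isolated in $G_{-u}$, so the unique uncovered vertex of any $M' \in \mathrm{NPM}(G_{-u})$ must be $w$, and therefore $M' \cup \{e\}$ is a perfect matching of $G$. These two assignments are mutually inverse, which gives the equality (and in particular handles the degenerate case $\np(G) = 0$ automatically).

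Next I would show $\np(G) \le \nnp(G_{-v})$ for an arbitrary $v \in V$ by constructing an injection $\mathrm{PM}(G) \hookrightarrow \mathrm{NPM}(G_{-v})$. Given $M \in \mathrm{PM}(G)$, let $f_v$ be the unique edge of $M$ incident to $v$; then $M \setminus \{f_v\}$ is a matching of $G_{-v}$ leaving uncovered only the neighbour of $v$ saturated by $f_v$, so it lies in $\mathrm{NPM}(G_{-v})$. The delicate point is injectivity: if $M_1, M_2 \in \mathrm{PM}(G)$ both map to the same $N$, then $N$ leaves exactly one vertex $z$ of $G_{-v}$ uncovered, so viewed inside $G$ the matching $N$ misses precisely $v$ and $z$; since $M_i = N \cup \{f_v^{(i)}\}$ is perfect, the removed edge $f_v^{(i)}$ must saturate both $v$ and $z$, forcing $f_v^{(1)} = \{v, z\} = f_v^{(2)}$ and hence $M_1 = M_2$.

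Combining the two parts yields $\nnp(G_{-u}) = \np(G) \le \nnp(G_{-v})$ for every $v \in V$, so $\min\{\nnp(G_{-v}) \mid v \in V\}$ is attained at $v = u$ and equals $\np(G)$, which is exactly the displayed identity. I expect the only genuinely delicate step to be the injectivity argument in the second part, namely recovering the deleted edge from the single uncovered vertex of its image; by contrast the forcing of the pendant edge in the first part is immediate from $\deg(w) = 1$, and deducing the minimum is then purely formal.
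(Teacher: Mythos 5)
Your proof is correct and follows essentially the same route as the paper's: the bijection $M' \mapsto M' \cup \{e\}$ between near-perfect matchings of $G_{-u}$ and perfect matchings of $G$ for the equality, and the edge-removal map $M \mapsto M \setminus \{f_v\}$ for the inequality $\np(G) \leq \nnp(G_{-v})$. Your treatment is in fact slightly more careful than the paper's, which asserts the correspondence for general $v$ without spelling out the injectivity argument (recovering $f_v = \{v, z\}$ from the uncovered vertex $z$, using simplicity of $G$) that you make explicit.
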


\begin{proof}
For each vertex $v \in V$, each perfect matching of $G$ corresponds to a near-perfect matching of $G_{-v}$ (by removing the edge including $v$). But the converse is not necessarily true, namely there are near-perfect matching of $G_{-v}$ leaving a non-neighbor of $v$ in $G$ unmatched. Thus, we have $\np(G) \leq \nnp(G_{-v})$.

In case of the vertex $u$, each near-perfect matching $M'$ of $G_{-u}$ corresponds to a perfect matching $M$ of $G$, namely $M' \cup \{e\}$, and vice versa. Thus, we have $\np(G) = \nnp(G_{-u})$, giving the result.
\end{proof}

By applying this theorem, the number of perfect matchings of a simple graph with pendant edges can be determined from its polynomial deck and the following result is obtained as a corollary.

\begin{coro}
The matching polynomials $M(G, x, y)$ of simple graphs with a pendant edge are polynomial reconstructible.
\end{coro}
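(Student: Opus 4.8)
The plan is to reduce the reconstruction to the determination of the number of perfect matchings $\np(G)$, which is precisely the single piece of information left open by Proposition~\ref{prop:polynomial_reconstruction}. First I would observe that, for an arbitrary graph $H$, the number of near-perfect matchings $\nnp(H)$ can be read off directly from its matching polynomial $M(H, x, y)$: a near-perfect matching is exactly a matching $A$ with $\md(H, A) = 1$, so $\nnp(H)$ equals the sum of the coefficients of the monomials $x^{1} y^{j}$ in $M(H, x, y)$ (in fact only $j = \frac{\abs{V(H)} - 1}{2}$ can occur, since such a matching covers all but one vertex). In particular, each entry $M(G_{-v}, x, y)$ of the polynomial deck $\mathcal{D}_M(G)$ determines the associated value $\nnp(G_{-v})$.

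Next I would invoke Theorem~\ref{theo:pendant_number_perfect_matching}. As $G$ has a pendant edge, that theorem yields $\np(G) = \min{\{\nnp(G_{-v}) \mid v \in V\}}$. The right-hand side depends only on the multiset $\{\nnp(G_{-v}) \mid v \in V\}$, and by the preceding step this multiset is extracted from $\mathcal{D}_M(G)$ by reading off the appropriate coefficient of each polynomial in the deck. The correspondence between polynomials and vertices is irrelevant here, because taking a minimum over a multiset requires no such labelling. Hence $\np(G)$ is determined by $\mathcal{D}_M(G)$ alone.

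Finally, feeding $\mathcal{D}_M(G)$ together with the now-known value $\np(G)$ into Proposition~\ref{prop:polynomial_reconstruction} recovers $M(G, x, y)$, which is the assertion of the corollary.

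I do not anticipate a serious obstacle, since the essential combinatorial content already resides in Theorem~\ref{theo:pendant_number_perfect_matching}. The only point demanding a little care is the first step, namely confirming that $\nnp(G_{-v})$ is genuinely encoded as a coefficient of $M(G_{-v}, x, y)$ --- equivalently, that the $\md = 1$ terms of the matching polynomial of $G_{-v}$ count exactly its near-perfect matchings; everything else amounts to bookkeeping on the polynomial deck.
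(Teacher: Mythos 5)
Your proposal is correct and follows the same route as the paper: the paper likewise obtains $\np(G)$ from the polynomial deck via Theorem~\ref{theo:pendant_number_perfect_matching} and then concludes by Proposition~\ref{prop:polynomial_reconstruction}. You merely make explicit two points the paper leaves implicit, namely that $\nnp(G_{-v})$ is the coefficient of $x^{1}y^{(\abs{V}-1)/2}$ in $M(G_{-v}, x, y)$ and that taking the minimum requires no correspondence between deck entries and vertices.
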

\section{Counterexamples for arbitrary graphs}
\label{sec:counterexamples}

While it is true that the matching polynomials of graphs with an odd number of vertices or with an pendant edge are polynomial reconstructible, it does not hold for arbitrary graphs.

There are graphs which have the same polynomial deck and yet their matching polynomials are different. Although there are already counterexamples with as little as six vertices, its seems that nothing have been published before in connection with the question addressed here.

\begin{rema}
The matching polynomials $M(G, x, y)$ of arbitrary graphs are not polynomial reconstructible. The minimal counterexample for simple graphs (with respect to the number of vertices and edges) are the graphs $G_1$, $G_2$ shown in Figure~\ref{fig:counterexample}.
\end{rema}

\begin{figure}
\begin{center}
	\includegraphics{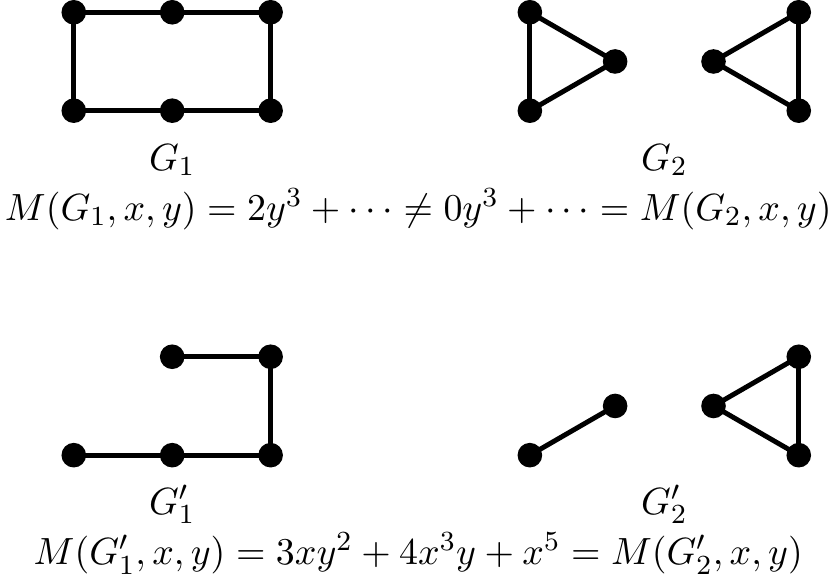}
\end{center}
\caption{Graphs $G_1$ and $G_2$, which are the minimal simple graphs creating a counterexamples for the polynomial reconstructibility of the matching polynomial $M(G, x, y)$. The decks of $G_1$ and $G_2$ consist of six graphs, each isomorphic to $G_1'$ and $G_2'$, respectively. Unlike the matchin polynomials of $G_1$ and $G_2$, the matching polynomials of $G_1'$ and $G_2'$ coincide.}
\label{fig:counterexample}
\end{figure}

The graphs creating the minimal counterexample have six vertices and there are three more pairs of such simple graphs, which are given in Figure \ref{fig:more_counterexamples}.

\begin{figure}
\begin{center}
	\includegraphics{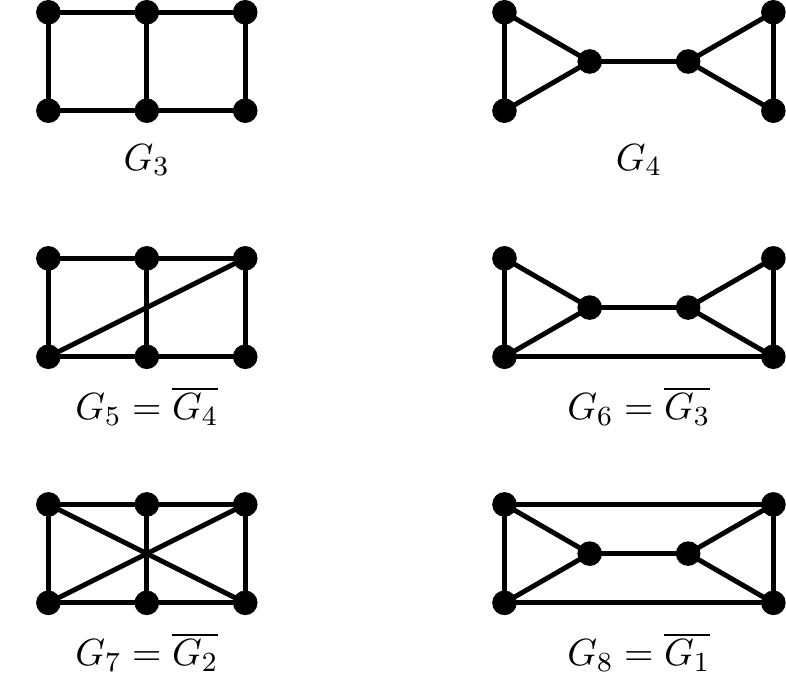}
\end{center}
\caption{The other counterexamples on six vertices for the polynomial reconstructibility of the matching polynomial $M(G, x, y)$.}
\label{fig:more_counterexamples}
\end{figure}

The question arises, whether or not there are such counterexamples consisting of graphs with an arbitrary even number of vertices. In the reminder, we give an affirmative answer to this questions.

Let $P_n$ and $C_n$ be a \emph{path} and a \emph{cycle} on $n$ vertices, respectively. For a graph $G = (V, E)$, $\overline{G}$ denotes the \emph{complement} of $G$, i.e. $\overline{G} = (V, \binom{V}{2} \setminus E)$. For two graphs $G$ and $H$, the \emph{disjoint union} of $G$ and $H$ is denoted by $G \cdotcup H$.

\begin{theo}
Let $k \leq 3$. The matching polynomials $M(G, x, y)$ of the graphs $C_{2k}$, $C_k \cdotcup C_k$ and $\overline{C_{2k}}$, $\overline{C_k \cdotcup C_k}$ are not polynomial reconstructible.
\end{theo}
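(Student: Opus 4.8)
The plan is to treat the two pairs $(C_{2k}, C_k \cdotcup C_k)$ and $(\overline{C_{2k}}, \overline{C_k \cdotcup C_k})$ in turn, reducing everything to one polynomial identity for paths (I read the hypothesis as $k \geq 3$, which is what makes $C_k$ a simple graph and produces counterexamples on all even orders $2k \geq 6$). For the first pair I would begin by identifying the decks. Since $C_{2k}$ is vertex-transitive, every vertex-deleted subgraph is isomorphic to $P_{2k-1}$, so its polynomial deck consists of $2k$ copies of $M(P_{2k-1}, x, y)$; likewise deleting any vertex of $C_k \cdotcup C_k$ leaves $P_{k-1} \cdotcup C_k$, so its polynomial deck consists of $2k$ copies of $M(P_{k-1} \cdotcup C_k, x, y) = M(P_{k-1}, x, y)\,M(C_k, x, y)$ by multiplicativity over disjoint unions. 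Hence the two polynomial decks coincide as soon as one establishes
\begin{align}
M(P_{2k-1}, x, y) = M(P_{k-1}, x, y)\, M(C_k, x, y). \tag{$\ast$}
\end{align}

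To prove $(\ast)$ I would use the edge-deletion recurrences $M(P_n) = x\,M(P_{n-1}) + y\,M(P_{n-2})$ and $M(C_k) = M(P_k) + y\,M(P_{k-2})$, together with the path addition formula $M(P_{a+b}) = M(P_a)M(P_b) + y\,M(P_{a-1})M(P_{b-1})$, which follows immediately by conditioning a matching of $P_{a+b}$ on whether it uses the central edge. Substituting $a = k$, $b = k-1$ and comparing with $M(P_{k-1})\bigl(M(P_k) + y\,M(P_{k-2})\bigr)$ yields $(\ast)$ at once. With the polynomial decks equal, Proposition~\ref{prop:polynomial_reconstruction} reduces the reconstruction question to the number of perfect matchings, and here $\np(C_{2k}) = 2$ while $\np(C_k \cdotcup C_k) = \np(C_k)^2 \in \{0, 4\}$ according to the parity of $k$. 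In every case these differ, so $M(C_{2k}, x, y) \neq M(C_k \cdotcup C_k, x, y)$ despite the identical polynomial decks, which is exactly the failure of polynomial reconstructibility for the first pair.

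For the complement pair the decks are easy to locate because $(\overline{G})_{-v} = \overline{G_{-v}}$, so the polynomial deck of $\overline{C_{2k}}$ is $2k$ copies of $M(\overline{P_{2k-1}})$ and that of $\overline{C_k \cdotcup C_k}$ is $2k$ copies of $M(\overline{P_{k-1} \cdotcup C_k})$. The crucial step, and the one I expect to be the main obstacle, is a lemma stating that the matching polynomial of a complement is already determined by the matching polynomial of the graph. I would prove it by inclusion–exclusion: writing $p_i(G)$ for the number of $i$-matchings of $G$, a $j$-matching of $\overline{H}$ is a $j$-matching of $K_n$ ($n = \abs{V(H)}$) avoiding every edge of $H$, and summing over the sub-collection of chosen pairs that happen to be edges of $H$ (which must form a matching of $H$) gives
\begin{align}
p_j(\overline{H}) = \sum_{i=0}^{j} (-1)^i\, p_i(H)\, p_{j-i}(K_{n-2i}).
\end{align}
The right-hand side depends only on $n$ and on the numbers $p_i(H)$, i.e.\ only on $M(H, x, y)$. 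Applying this to $P_{2k-1}$ and $P_{k-1} \cdotcup C_k$, which share a matching polynomial by $(\ast)$, gives $M(\overline{P_{2k-1}}) = M(\overline{P_{k-1} \cdotcup C_k})$, so the two complement decks coincide as well.

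Finally, to see that $M(\overline{C_{2k}}) \neq M(\overline{C_k \cdotcup C_k})$, I would run the same inclusion–exclusion formula in reverse: since complementation is an involution, the formula expresses $M(H)$ in terms of $M(\overline{H})$ just as well as the converse, so $M(\overline{C_{2k}}) = M(\overline{C_k \cdotcup C_k})$ would force $M(C_{2k}) = M(C_k \cdotcup C_k)$, contradicting the perfect-matching count found above. Thus the complement pair again has identical polynomial decks but distinct matching polynomials. The entire argument is uniform in $k$, so it produces counterexamples on every even number of vertices $2k \geq 6$, and the only genuinely non-routine ingredient is the complement lemma together with its reversibility.
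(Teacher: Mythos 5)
Your proof is correct, and you rightly read the hypothesis as $k \geq 3$ (the paper's ``$k \leq 3$'' is evidently a typo, since for $k \leq 2$ the graphs involved are not simple, as the paper itself notes when discussing $C_2 \cdotcup C_2$). The skeleton of your argument coincides with the paper's: identify the polynomial decks as $2k$ copies of $M(P_{2k-1})$ and of $M(C_k \cdotcup P_{k-1})$, prove these two polynomials equal via the edge recurrence, separate the original graphs by counting perfect matchings ($\np(C_{2k}) = 2$ versus $0$ or $4$), and transfer to the complements via $\overline{G_{-v}} = \overline{G}_{-v}$. Your identity $(\ast)$ is only a mild repackaging of the paper's computation: the paper applies the recurrence once to a middle edge of $P_{2k-1}$ and once to a cycle edge of $C_k \cdotcup P_{k-1}$, landing both times on $M(P_k \cdotcup P_{k-1}) + y \cdot M(P_{k-2} \cdotcup P_{k-1})$, which together with multiplicativity over disjoint unions is exactly your product formula (your indexing of the split edge is in fact the consistent one; the paper's ``$(k-2)$th and $(k-1)$th vertex'' is off by one). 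The genuine divergence is the complement step: the paper cites Godsil's result as a black box for the fact that $M(G, x, y)$ determines $M(\overline{G}, x, y)$, whereas you prove it from scratch via the inclusion--exclusion identity $p_j(\overline{H}) = \sum_{i=0}^{j} (-1)^i p_i(H)\, p_{j-i}(K_{n-2i})$, which is correct (and $n$ is itself recoverable from $M(H, x, y)$ as its $x$-degree, so the right-hand side really depends on $M(H)$ alone). You also make explicit the involution/injectivity argument needed to conclude that the complements' polynomials \emph{differ} --- a point the paper leaves implicit in its assertion that ``it is enough to consider'' the uncomplemented pair, since deck equality transfers forward but polynomial inequality needs the determination to be invertible. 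So your route is self-contained and slightly more careful on that transfer, at the cost of length; the paper's is shorter but leans on the citation for both directions of the complement correspondence.
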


\begin{proof}
Due to \textcite[Corollary 2.3]{godsil1981c}, the matching polynomial of a graph is determined by the matching polynomial of the complement of this graph. Furthermore, $\overline{G_{-v}} = \overline{G}_{-v}$. Therefore, it is enough to consider the graphs $C_{2k}$ and $C_k \cdotcup C_k$.

The matching polynomials of these two graphs do not coincide  because $C_{2k}$ has exactly two perfect matchings, while $C_k \cdotcup C_k$ has zero ($k$ odd) or four ($k$ even) perfect matchings.

On the other hand, their polynomial decks are identical. At first observe, that $(C_{2k})_{-v}$ is isomorphic to $P_{2k-1}$ and $(C_k \cdotcup C_k)_{-v}$ is isomorphic to $C_k \cdotcup P_{k-1}$ for every vertex $v$ of the respective graph.

It remains to show that the matching polynomials of these graphs in the deck coincide, i.e. $M(P_{2k-1}, x, y) = M(C_k \cdotcup P_{k-1}, x, y)$. Therefore, we make use of the well-known recurrence relation for the matching polynomial \cite[Theorem 1]{farrell1979b}:
\begin{align*}
M(G, x, y) = M(G_{-e}, x, y) + y \cdot M(G_{-u-v}, x, y),
\end{align*}
where $e = \{u, v\}$ is an edge of $G$, $G_{-e}$ is the graph with the edge $e$ deleted and $G_{-u-v}$ is the graph with the vertices of $e$ deleted.

Applying the recurrence relation to the edge connecting the $(k-2)$th and $(k-1)$th vertex of $P_{2k-1}$ (counted from either side), we obtain
\begin{align*}
M(P_{2k-1}, x, y) = M(P_{k-1} \cdotcup P_k, x, y) + y \cdot M(P_{k-2} \cdotcup P_{k-1}, x, y).
\end{align*}
Applying the recurrence relation to an edge of the cycle in $C_k \cdotcup P_{k-1}$, we obtain exactly the same term:
\begin{align*}
M(C_k \cdotcup P_{k-1}, x, y) = M(P_{k} \cdotcup P_{k-1}, x, y) + y \cdot M(P_{k-2} \cdotcup P_{k-1}, x, y).
\end{align*}

It follows, that the polynomial decks coincide, while the matching polynomials of the original graphs do not. Hence, those cannot be determined from the corresponding polynomial decks.
\end{proof}

In fact, the above construction for $k = 2$, in the case of the graphs $C_{4}$ and $C_2 \cdotcup C_2$, where $C_2$ is a graph on two vertices connected by two parallel edges, provide an even smaller counterexample, though the graphs are not simple.

In addition, to obtain examples on an arbitrary even number of vertices such that the graphs and their complements are connected, the construction of the graphs $G_3$ and $G_4$ as well as of their complements $G_5$ and $G_6$ can be generalized analogously.

\section*{Acknowledgement}
Many thanks are due to Julian A. Allagan for his suggestions improving the presentation of this paper.

\printbibliography

\end{document}